\definecolor{webgreen}{rgb}{0,.5,0}
\definecolor{webbrown}{rgb}{.6,0,0}
\newcommand{\R}{{\mathbb R}}
\newcommand{\Z}{{\mathbb Z}}
\newcommand{\tarc}{\mbox{\large$\frown$}}
\newcommand{\arc}[1]{\stackrel{\tarc}{#1}}
\begin{document}

\begin{center}
\epsfxsize=4in
\end{center}

\theoremstyle{plain}
\newtheorem{theorem}{Theorem}
\newtheorem{corollary}[theorem]{Corollary}
\newtheorem{lemma}[theorem]{Lemma}
\newtheorem{proposition}[theorem]{Proposition}

\theoremstyle{definition}
\newtheorem{definition}[theorem]{Definition}
\newtheorem{example}[theorem]{Example}
\newtheorem{conjecture}[theorem]{Conjecture}

\theoremstyle{remark}
\newtheorem{remark}[theorem]{Remark}

\begin{center}
\vskip 1cm{\LARGE\bf  Lattice Points Close to a Helix
}

\vskip 1cm
\large
Jack Dalton and Ognian Trifonov\\
University of South Carolina\\
Department of Mathematics \\
Columbia, SC 29208 \\
\href{mailto:jrdalton@math.sc.edu}{\tt jrdalton@math.sc.edu} \\
\href{mailto:trifonov@math.sc.edu}{\tt trifonov@math.sc.edu} \\
\end{center}

\vskip .2 in

\begin{abstract}
We obtain lower bound for the  maximum distance between any three distinct   points in an  affine  lattice which are  close to a helix with small curvature  and  torsion.  
\end{abstract}
 
\section{Introduction}

The problem of estimating the number of lattice points on or close to a curve has a rich history. In  1926,
  Jarn\'{\i}k \cite{Jar}   proved that
  the number of integer
points on a strictly convex closed curve of length $L >3$ does not exceed
$3(2\pi)^{-1/3}L^{2/3} + O \left ( L^{1/3} \right )$ and the exponent and the
constant of the leading term are best possible. Assuming higher order smoothness conditions  on the curve, a number of authors achieved sharper estimates, in particular Swinnerton-Dyer \cite{SwD} and Bombieri and Pila \cite{BomP}.

Estimating the number of lattice points {\it close} to a curve is more recent topic. Of a survey of results on this topic and their applications one may see \cite{FT} and \cite{FGT}.

In 1972  Zygmund published a paper on  spherical summability of Fourier series in two dimensions where an essential component was the following theorem. 

\begin{theorem} (Schinzel)   An arc of length $\sqrt [3]{2} R^{1/3}$ on a circle of radius $R$ contains no more than two lattice points. 
\end{theorem}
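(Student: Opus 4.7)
The plan is to argue by contradiction. Suppose the arc contains three distinct lattice points $P_{1},P_{2},P_{3}$; write $L=\sqrt[3]{2}\,R^{1/3}$. Since a straight line meets a circle in at most two points, $P_{1},P_{2},P_{3}$ are not collinear, and hence form the vertices of a genuine triangle $T$.

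My next step would be to record two facts about $T$. First, because the vertices have integer coordinates, the shoelace formula shows that twice the area of $T$ is a nonzero integer, so the area $A$ satisfies $A\ge 1/2$. Second, the classical circumradius identity $R=abc/(4A)$, where $a,b,c$ are the side lengths, can be rearranged to $abc=4RA$, and combined with the lower bound on $A$ this gives
\[
abc\ge 2R.
\]

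The remaining task is to contradict this by bounding the sides from above. Each side of $T$ is a chord joining two points on the given arc, and so its length is strictly less than the length of the sub-arc connecting its endpoints (because $2R\sin(\theta/2)<R\theta$ for every $\theta>0$). Since that sub-arc is contained in the original arc, its length is at most $L$, so each of $a,b,c$ is strictly less than $L$. Multiplying gives $abc<L^{3}=2R$, contradicting the inequality above.

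I do not anticipate a serious obstacle; the argument just assembles three standard ingredients—the half-integrality of the area of a lattice triangle, the extended law of sines, and the strict inequality chord $<$ arc. The only feature worth flagging is that the hypothesis sits exactly at the boundary $L^{3}=2R$, so one really needs the \emph{strict} form of the chord-versus-arc inequality; the weak form would only rule out arcs of length $<L$.
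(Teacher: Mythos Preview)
Your proof is correct and follows essentially the same approach as the paper's (Pelczynski's) proof: non-collinearity of lattice points on a circle, the area bound $A\ge 1/2$ from the shoelace/determinant formula, and the circumradius identity $abc=4RA$ to obtain $abc\ge 2R$. You make explicit the final step---bounding each chord by the arc length via the strict inequality $\text{chord}<\text{arc}$ to reach the contradiction $abc<L^3=2R$---which the paper leaves implicit.
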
 

The original proof is due to Schinzel but the following short proof was provided by Pelczynski.

\begin{proof}

Let $A(x_1,y_1)$, $B(x_2,y_2)$, and $C(x_3,y_3)$ be distinct lattice points on a circle of radius $R$. 

Denote by $a$, $b$, and $c$ the lengths of the sides of $\triangle ABC$, and its area by $S$. 
Since the circle is strictly convex curve, we have 
$S = \frac{1}{2} | 
\left | 
\begin{array}{ccc}
x_1 & y_1 & 1\\
x_2 & y_2 & 1\\
x_3 & y_3 & 1
\end{array} \right | | \ \geq {1 \over 2}$. 

On the other hand, by a formula attributed to Heron of Alexandria, we have  ${\displaystyle S = {abc \over 4R}}$. 

Thus, $abc \geq 2R$. 
\end{proof}
 
In the paper \cite{HowTri}  Howard and the second author generalized Schinzel's result to the case of plane curve with bounded curvature and  lattice points close to a plane affine lattice (where plane affine lattice is the set 
$\{ {\bf v}_0 + m{\bf v}_1 + n{\bf v}_2 : m \in {\mathbb Z}, n \in {\mathbb Z} , \}$ with 
${\bf v}_0, {\bf v}_1, {\bf v}_2$, and plane vectors in  such that ${\bf v}_1$ and  ${\bf v}_2$ are linearly independent). 

There are only a few papers on the topic of lattice points close to a three-dimensional curve. 

Huang \cite{Hua} obtains estimates for the number of lattice points close dilations of a curve with parametrization $(x, f_1(x), f_2(x))$, 
$x \in [a,b]$. Huang's estimates depend on the dilation parameter $q$, the upper bound $\delta $ for the distance between the lattice points and the curve, and a constant depending on the functions $f_1$ and $f_2$ which is not explicitly computed.

 In Chapter 4 of his PhD dissertation Letendre  also obtains  estimates for the number of lattice points close dilations of a curve with parametrization $(x, f_1(x), f_2(x))$, 
$x \in [a,b]$. His results only assume  bounds on certain quantities involving derivatives of $f_1$ and $f_2$. 

We will consider the special case when the curve is a helix and obtain lower bound for the maximal distance between any three distinct points in a general lattice close to a helix. 
We define a lattice in three-space as follows.

\begin{definition}
Let $v_0,v_1,v_2,v_3 \in \R^2$ be vectors in ${\mathbb R}^3$ with $v_1$ , $v_2$, and $v_3$
linearly independent.  Then, the \it{affine lattice generated by $v_1$,  $v_2$, and $v_3$  with origin $v_0$} is
$$
\mathcal{L}=\mathcal{L}(v_0,v_1,v_2,v_3) = \{ v_0+m v_1 + nv_2 + pv_3: m,n,p \in \Z\}.
$$
 \end{definition}

We will show in Section 2 that  for each lattice ${\mathcal L}$ there exist positive constants ${\mathcal D_L}$ and ${\mathcal A_L}$  (depending on ${\mathcal L}$) such that if $A_0$ and $A_1$ are two distinct lattice points in 
${\mathcal L}$ then $|A_0A_1| \geq {\mathcal D_L}$; furthermore, if $A_0$, $A_1$, and $A_2$ are three non-collinear lattice points in ${\mathcal L}$, then   Area$(\triangle A_0A_1A_2) \geq {\mathcal A_L}$.

Our main result is

\begin{theorem} \label{main}
Let ${\mathcal H}$ be a helix of curvature $\kappa > 0$ and torsion $\tau > 0$, and let $$0 \leq \delta \leq \min \left (\frac{\mathcal D_L}{4},   \frac{{\mathcal D_L}^2}{11\pi^3}(\kappa^2 + \tau^2),   
\frac{2{\mathcal A_L}}{11\pi }(\kappa^2 + \tau^2)^{\frac{1}{2}} \right ).$$ 
Let ${\mathcal L}$ be affine lattice, and let $A_0$, $A_1$, and $A_2$ be three distinct  points in ${\mathcal L}$  which are within $\delta $ of the helix  ${\mathcal H}$. Then,   
the maximal distance between $A_0$, $A_1$, and $A_2$ is at least $$\min\left ( 1.2{\mathcal A_L}^{\frac{1}{3}} \kappa^{-\frac{1}{3}} - 2\delta,  \frac{\pi \tau}{\kappa^2 + \tau^2} - 2\delta  \right ) .$$
\end{theorem}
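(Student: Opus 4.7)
The strategy extends Pelczynski's argument for Schinzel's theorem to three dimensions: bound $\text{Area}(\triangle A_0 A_1 A_2)$ from below by $\mathcal{A}_L$ and from above using the pairwise chord lengths together with the curvature. After a rigid motion I parametrize $\mathcal{H}$ by arc length as
$$P(s)=\bigl(a\cos(\omega s),\,a\sin(\omega s),\,b\omega s\bigr),\quad a=\tfrac{\kappa}{\kappa^2+\tau^2},\ b=\tfrac{\tau}{\kappa^2+\tau^2},\ \omega=\sqrt{\kappa^2+\tau^2}.$$
For each $A_i$ pick a nearest helix point $P_i=P(s_i)$, relabel so that $s_0\le s_1\le s_2$, and set $\Delta_{ij}=\omega(s_j-s_i)$. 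A direct computation yields the identity $|P_iP_j|^2=4a^2\sin^2(\Delta_{ij}/2)+b^2\Delta_{ij}^2$.

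Let $d=\max_{i<j}|A_iA_j|$. If $d\ge \pi\tau/(\kappa^2+\tau^2)-2\delta$, the second term of the claimed minimum is already achieved. Otherwise $|P_0P_2|\le d+2\delta<\pi b$, which by the identity above forces $b^2\Delta_{02}^2<\pi^2 b^2$, so $\Delta_{02}<\pi$. Hence all three anchor points lie on an arc of angular span strictly less than $\pi$, i.e.\ within half a pitch of $\mathcal{H}$; this is precisely the range in which the helix is only a mildly twisted copy of a planar circle of radius $1/\kappa$.

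On this arc the key estimate to prove is a three-dimensional analogue of Pelczynski's identity $abc=4RS$, namely
$$\text{Area}(\triangle P_0P_1P_2)\,\le\,\frac{\kappa}{4}(1+E)\,|P_0P_1|\,|P_1P_2|\,|P_0P_2|,$$
with $E$ a remainder controlled by $\omega\Delta_{02}<\pi$. I would prove this by expanding $P(s)-P(s_0)$ in the Frenet frame at $P_0$ using the closed-form integrals of the Frenet--Serret equations for a helix, computing $(P_1-P_0)\times (P_2-P_0)$ in that basis, and separating the leading contribution $(\kappa/2)(s_1-s_0)(s_2-s_1)(s_2-s_0)$ from the torsion-dependent and higher-order terms. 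Since $|A_i-P_i|\le\delta$, a bilinear expansion of the cross product then gives $|\text{Area}(\triangle A_0A_1A_2)-\text{Area}(\triangle P_0P_1P_2)|\le 2\delta(|P_0P_1|+|P_1P_2|+|P_0P_2|)+O(\delta^2)$. Combining with the lattice bound $\text{Area}(\triangle A_0A_1A_2)\ge\mathcal{A}_L$ (non-collinear case) and writing $D=\max|P_iP_j|$, I obtain $\mathcal{A}_L\le \frac{\kappa}{4}(1+E)D^3+6\delta D+O(\delta^2)$; rearranging and using $d\ge D-2\delta$ yields the first branch of the minimum. The collinear case is immediate: three distinct collinear lattice points have maximum pairwise distance at least $2\mathcal{D}_L$, which under $\delta\le\mathcal{D}_L/4$ already dominates the claimed bound.

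The main obstacle is quantitative. The ``ideal'' Pelczynski computation (with $E=0$ and no $\delta$-perturbation) delivers the constant $4^{1/3}\approx 1.587$; to keep the constant as large as $1.2$, the remainder $E$ and the area-perturbation errors must be jointly absorbed into the slack $4^{1/3}-1.2$. The three smallness hypotheses $\delta\le\mathcal{D}_L/4$, $\delta\le\mathcal{D}_L^2(\kappa^2+\tau^2)/(11\pi^3)$, and $\delta\le 2\mathcal{A}_L(\kappa^2+\tau^2)^{1/2}/(11\pi)$ in the theorem are to be used decisively at this step: the first controls how $\delta$ compares to the lattice spacing, the second bounds $\delta$ against the curvature-scaled chord length, and the third balances $\delta$ against $\mathcal{A}_L$ and $\omega$. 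The technical heart of the argument is the careful bookkeeping of the Frenet-frame expansion and perturbation bounds needed to verify that these three hypotheses together are sufficient to preserve the constant $1.2$ in the final lower bound.
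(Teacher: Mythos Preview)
Your overall architecture --- case split on whether the angular span $\Delta_{02}$ exceeds $\pi$, then an area-of-triangle argument in the Pelczynski spirit --- matches the paper's. But there is a genuine gap in your treatment of the collinear case, and it is not a bookkeeping slip.

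You dispose of the possibility that $A_0,A_1,A_2$ are collinear by noting that the maximum pairwise distance is then at least $2\mathcal{D}_L$ and asserting that this ``already dominates the claimed bound.'' That is false in general: the first branch of the minimum is $1.2\,\mathcal{A}_L^{1/3}\kappa^{-1/3}-2\delta$, and nothing in the hypotheses relates $\mathcal{D}_L$ to $\mathcal{A}_L^{1/3}\kappa^{-1/3}$; for small $\kappa$ the latter is arbitrarily large. The collinear case therefore cannot be handled by a crude lattice-spacing bound --- it must be \emph{excluded}. This is exactly what the hypothesis $\delta\le\mathcal{D}_L^2(\kappa^2+\tau^2)/(11\pi^3)$ is for (you misidentified its role). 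The paper's argument: from the exact area identity $4S^2=T_1+T_2+T_3$ of Lemma~\ref{sthelix} one extracts the \emph{lower} bound $S=\text{Area}(\triangle P_0P_1P_2)\ge (2/\pi^3)h_1h_2(h_1+h_2)$ using only the $T_1$ term and $\sin x\ge 2x/\pi$. If $A_0,A_1,A_2$ were collinear, the area-perturbation lemma (Lemma~\ref{lem:4}) gives $S\le\tfrac{11}{8}\delta(h_1+h_2)\sqrt{a^2+b^2}$. Combining these with $h_i\ge\tfrac{1}{2}\mathcal{D}_L(a^2+b^2)^{-1/2}$ forces $\delta>\mathcal{D}_L^2/(11\pi^3(a^2+b^2))$, a contradiction. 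So a \emph{two-sided} estimate on $S$ is essential: the upper bound drives the chord length up, and the lower bound rules out degeneracy. Your outline has only the upper bound.

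A secondary point: in place of your Frenet-frame expansion with an unspecified remainder $E$, the paper works directly from the closed-form identity $4S^2=T_1+T_2+T_3$ and bounds each $T_i$ explicitly by a multiple of $(h_1+h_2)^6$ (Lemma~\ref{T2} handles the torsion term $T_2$ via the mean value theorem). Once one has shown $S\ge\mathcal{A}_L/2$ (this is where the third $\delta$-hypothesis enters), this yields $50\,\mathcal{A}_L^2<(h_1+h_2)^6 a^2(a^2+b^2)$ and hence $|P_0P_2|\ge\tfrac{2}{\pi}(h_1+h_2)\sqrt{a^2+b^2}>1.2\,\mathcal{A}_L^{1/3}\kappa^{-1/3}$. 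Your product-of-chords inequality could in principle substitute, but the constant $1.2$ leaves little slack and you have not quantified $E$; the paper's route avoids that difficulty entirely.
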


The above theorem is a generalization of Schinzel's result to the case of a helix. 

\begin{corollary} \label{cor:1}
Let ${\mathcal H}$ be a helix of curvature $\kappa > 0$ and torsion $\tau > 0$ with $\tau \kappa^{\frac{1}{3}} \geq 0.4(\kappa^2 + \tau ^2)$.  Let $A_0$, $A_1$, and $A_2$ be three distinct lattice points  on ${\mathcal H}$.
Then,   
the maximal distance between $A_0$, $A_1$, and $A_2$ is at least ${\displaystyle 1.1 \kappa^{-\frac{1}{3}}}$.
\end{corollary}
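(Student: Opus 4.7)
The plan is to deduce Corollary~\ref{cor:1} from Theorem~\ref{main} by specializing to $\mathcal{L} = \mathbb{Z}^3$ with $\delta = 0$, and then simplifying the resulting lower bound using the curvature--torsion hypothesis.

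First I would record the lattice constants for $\mathbb{Z}^3$: the shortest nonzero integer vector has length $1$, so $\mathcal{D}_L = 1$; and for three non-collinear integer points the triangle area equals $\tfrac{1}{2}\,|(A_1-A_0)\times(A_2-A_0)|$, where the cross product is a nonzero integer vector of length at least $1$, so $\mathcal{A}_L = \tfrac{1}{2}$. Because the three points lie on $\mathcal{H}$ I take $\delta = 0$, which is admissible since each of the three upper bounds on $\delta$ in Theorem~\ref{main} is strictly positive. Theorem~\ref{main} then gives
\[
\max_{i\neq j}|A_iA_j| \;\geq\; \min\left(1.2\,\mathcal{A}_L^{1/3}\,\kappa^{-1/3},\ \frac{\pi\tau}{\kappa^2+\tau^2}\right).
\]

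Next I would use the hypothesis $\tau\kappa^{1/3} \geq 0.4(\kappa^2+\tau^2)$, which rearranges to $(\kappa^2+\tau^2)/\tau \leq 2.5\,\kappa^{1/3}$, to bound the torsion term:
\[
\frac{\pi\tau}{\kappa^2+\tau^2} \;\geq\; 0.4\pi\,\kappa^{-1/3} \;>\; 1.1\,\kappa^{-1/3}.
\]
The corollary then reduces to confirming that the other term $1.2 \cdot 2^{-1/3}\,\kappa^{-1/3}$ also dominates $1.1\,\kappa^{-1/3}$.

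This final numerical comparison is the step I expect to be most delicate, since $1.2 \cdot 2^{-1/3} \approx 0.952$, which at face value is smaller than $1.1$. I would therefore revisit the proof of Theorem~\ref{main} to check whether, under the sharper assumptions of the corollary (the three points lying exactly on $\mathcal{H}$, and torsion dominating curvature in the sense of the hypothesis), the constant $1.2$ in the first bound can be sharpened to at least $1.1\cdot 2^{1/3}\approx 1.386$, or whether the minimum triangle area for integer points constrained to lie on $\mathcal{H}$ is effectively larger than $\tfrac{1}{2}$ so that the first term ceases to be the binding constraint. Granting such a sharpening, the remaining steps are purely arithmetic.
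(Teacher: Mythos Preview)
You correctly identify the numerical obstruction: applying Theorem~\ref{main} with $\delta=0$ and $\mathcal{A}_L=\tfrac12$ yields only $1.2\cdot 2^{-1/3}\kappa^{-1/3}\approx 0.95\,\kappa^{-1/3}$, which falls short of $1.1\,\kappa^{-1/3}$. The paper does \emph{not} deduce the corollary from Theorem~\ref{main}. It first proves a separate result, Theorem~\ref{on}, for lattice points lying exactly on the helix, in which the constant $1.2$ is replaced by $1.5$. With $\mathcal{A}_L=\tfrac12$ this gives $1.5\cdot 2^{-1/3}>1.1$, and the hypothesis $\tau\kappa^{1/3}\ge 0.4(\kappa^2+\tau^2)$ ensures (since $0.4\pi>1.5\cdot 2^{-1/3}$) that the torsion branch $\pi\tau/(\kappa^2+\tau^2)$ is not the binding one, so the minimum equals $1.5\cdot 2^{-1/3}\kappa^{-1/3}>1.1\,\kappa^{-1/3}$.

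The source of the improvement from $1.2$ to $1.5$ is exactly what you suspected. In Theorem~\ref{on} the lattice points themselves lie on $\mathcal{H}$, so the on-helix triangle $A_0A_1A_2$ has area at least $\mathcal{A}_L$ directly, giving $200\,\mathcal{A}_L^2<(h_1+h_2)^6 a^2(a^2+b^2)$. In Theorem~\ref{main} only the perturbed points $A_0',A_1',A_2'$ are lattice points, and after the perturbation estimate (Lemma~\ref{lem:4}) one obtains merely area $\ge \mathcal{A}_L/2$ for the on-helix triangle, so $50\,\mathcal{A}_L^2<(h_1+h_2)^6 a^2(a^2+b^2)$. This factor of $4$ in the squared-area inequality costs exactly $4^{1/6}=2^{1/3}$ in the final distance bound. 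Your sketch is therefore on the right track, but the ``sharpening'' you leave as a hope is not a minor adjustment inside the proof of Theorem~\ref{main}; it is precisely the content of Theorem~\ref{on}, and your argument is incomplete until that on-helix version is actually stated and proved.
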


The paper is organized as follows. In Section 2 we prove some auxiliary lemmas. In Section 3 we prove the main result. 

We expect that an analogue of our main result  will hold for curves such that $0 < c_1K < \kappa(s) < c_2K$ and $0 < c_3T < \tau(s) < c_4T$ where $c_1,c_2,c_3$, and $c_4$ are positive constants.

\section{Some auxiliary results}

In this section we prove several lemmas which will be needed in the proof of Theorem \ref{main}.

 \begin{lemma} \label{lem:1}
The area of a triangle in $3$-space with non-collinear lattice point vertices is at least $\frac{1}{2}$.
\end{lemma}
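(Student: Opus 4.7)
The plan is to use the classical observation that in $\mathbb{Z}^3$ the cross product of two integer vectors is again an integer vector, combined with the fact that a nonzero integer vector has magnitude at least $1$.

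First I would label the vertices of the triangle as $P_0, P_1, P_2 \in \Z^3$ and set $\mathbf{u} = P_1 - P_0$ and $\mathbf{v} = P_2 - P_0$. Both vectors have integer coordinates, and the area of the triangle is
\[
S = \tfrac{1}{2} \, \| \mathbf{u} \times \mathbf{v} \|.
\]
Writing $\mathbf{u} = (u_1, u_2, u_3)$ and $\mathbf{v} = (v_1, v_2, v_3)$, each coordinate of $\mathbf{u} \times \mathbf{v}$ is a $2 \times 2$ determinant with entries in $\Z$, so $\mathbf{u} \times \mathbf{v} \in \Z^3$.

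Next I would use the hypothesis that $P_0, P_1, P_2$ are non-collinear. This is exactly the statement that $\mathbf{u}$ and $\mathbf{v}$ are linearly independent in $\R^3$, hence $\mathbf{u} \times \mathbf{v} \ne \mathbf{0}$. Since a nonzero vector in $\Z^3$ has Euclidean norm at least $1$, we obtain $\|\mathbf{u} \times \mathbf{v}\| \ge 1$ and therefore $S \ge \tfrac{1}{2}$.

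There is no real obstacle here; the only thing to be careful about is the passage from non-collinearity of the three points to linear independence of the two difference vectors (immediate) and the fact that the smallest positive value of $\sqrt{a^2 + b^2 + c^2}$ for $(a,b,c) \in \Z^3 \setminus \{0\}$ is $1$, attained on the standard basis vectors. I would also note that the bound $\tfrac{1}{2}$ is sharp, as shown by the triangle with vertices $(0,0,0), (1,0,0), (0,1,0)$.
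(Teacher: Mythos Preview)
Your proof is correct and follows essentially the same argument as the paper: both compute the area as $\tfrac{1}{2}\|\overrightarrow{AB}\times\overrightarrow{AC}\|$, observe that the cross product has integer coordinates, and conclude from non-collinearity that this integer vector is nonzero and hence has norm at least $1$. Your version is slightly more explicit about why $\mathbf{u}\times\mathbf{v}\neq\mathbf{0}$ and adds the sharpness remark, but the substance is identical.
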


\begin{proof}

Let $A, B$, and $C$ be non-collinear lattice points in $\mathbb{R}^3$. Then, the area of $\triangle ABC$ equals $\frac{1}{2} \left |  \overrightarrow{AB} \times  \overrightarrow{AC} \right |$.
Since $A$, $B$, and $C$ are lattice points, the components of the vectors $\overrightarrow{AB}$, $\overrightarrow{AC}$ 
are integers. Therefore, the components of the cross  product  $\overrightarrow{AB} \times  \overrightarrow{AC}$ are integers, as well. Thus, Area$(\triangle ABC) = \frac{1}{2} \sqrt{n}$ for some integer $n$. Since the points 
$A$, $B$, and $C$ are non-collinear, Area$(\triangle ABC) \neq 0$, so $n \neq 0$. \end{proof}

 The next two lemmas where proved in \cite{HowTri} in the ${\mathbb  R}^2$ case.  The lemmas are easily extended to the $n$-dimensional case. 
 
\begin{lemma} \label{lem:tri1pt}
Let $A$, $B$, $C$, and $C_1$ be points in ${\mathbb  R}^n$ where $n \geq 2$. Let $\delta \geq 0$ and $|CC_1| \leq \delta$. Denote by $S$ the area of  $\triangle ABC$, and by $S_1$ the area of  $\triangle ABC_1$. Then, 
$$|S - S_1| \leq \frac{\delta |AB|}{2}.$$
\end{lemma}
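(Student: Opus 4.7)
The plan is to express both areas via the base-height formula with common base $AB$, and then use the fact that the distance from a point to a fixed affine subspace is a $1$-Lipschitz function of the point.

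First I would assume without loss of generality that $A \neq B$, since otherwise both triangles degenerate and $S = S_1 = 0$, making the inequality trivial. With $A \neq B$, let $\ell$ be the line through $A$ and $B$ in $\mathbb{R}^n$, and let $h_C$ and $h_{C_1}$ denote the distances from $C$ and $C_1$ respectively to $\ell$. Then
\begin{equation*}
S = \tfrac{1}{2}|AB|\,h_C, \qquad S_1 = \tfrac{1}{2}|AB|\,h_{C_1},
\end{equation*}
so that
\begin{equation*}
|S - S_1| = \tfrac{1}{2}|AB|\,\bigl|h_C - h_{C_1}\bigr|.
\end{equation*}

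Next I would invoke the standard fact that the distance function $\mathrm{dist}(\,\cdot\,,\ell)$ is $1$-Lipschitz on $\mathbb{R}^n$: for any two points $P, Q$, one has $|\mathrm{dist}(P,\ell) - \mathrm{dist}(Q,\ell)| \leq |PQ|$. This follows at once from the reverse triangle inequality applied to distances of $P$ and $Q$ from their respective nearest points on $\ell$ (or equivalently from the fact that orthogonal projection onto the orthogonal complement of the direction of $\ell$ is a contraction). Applying this with $P = C$ and $Q = C_1$ gives $|h_C - h_{C_1}| \leq |CC_1| \leq \delta$.

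Combining the two displays yields $|S - S_1| \leq \frac{1}{2}|AB|\,\delta$, which is exactly the claim. I do not anticipate any real obstacle here; the only point worth handling cleanly is the Lipschitz estimate for the distance-to-line function, but in $\mathbb{R}^n$ this is immediate from the same projection argument that works in $\mathbb{R}^2$, which is why the extension from the plane case of \cite{HowTri} is essentially cost-free.
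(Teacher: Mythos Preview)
Your proof is correct and is essentially the same as the paper's: both express $S$ and $S_1$ via the base-height formula with common base $AB$ and then bound $|h_C - h_{C_1}|$ by $|CC_1| \leq \delta$ using the triangle inequality (which you phrase as the $1$-Lipschitz property of $\mathrm{dist}(\cdot,\ell)$). The only cosmetic difference is that you explicitly dispose of the degenerate case $A = B$, which the paper omits.
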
 

\begin{proof}
Let $h$ be the distance from the point $C$ to the line $AB$, and let $h_1$ be the distance from the point $C_1$ to the line $AB$.  Then, ${\displaystyle S = \frac{|AB| h}{2}}$ and ${\displaystyle S_1 = \frac{|AB| h_1}{2}}$.

By the triangle inequality, $$h_1 \leq h + |CC_1| \leq h + \delta \quad \mbox{ and  } \quad h \leq h_1 + |CC_1| \leq h_1 + \delta.$$ Therefore, $|h - h_1| \leq \delta$, so ${\displaystyle |S - S_1| = \frac{|AB| |h-h_1|}{2} \leq \frac{\delta |AB|}{2}}$.
\end{proof}

\begin{lemma} \label{lem:4}
Let  $\triangle ABC$  and  $\triangle A_1B_1C_1$ be triangles in ${\mathbb  R}^n$ where $n \geq 2$,  with areas $S$ and $S_3$, respectively.
Let $\delta \geq 0$ and assume
$$|AA_1| \leq \delta,\  |BB_1| \leq \delta, \mbox{ and } |CC_1| \leq \delta.$$

 Then, $$|S-S_3| \leq \frac{(|AB|+|AC|+|BC|)\delta}{2}+\frac{3\delta^2}{2}.$$
\end{lemma}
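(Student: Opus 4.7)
The plan is to reduce Lemma \ref{lem:4} to three successive applications of Lemma \ref{lem:tri1pt}, moving one vertex at a time from the configuration $\triangle ABC$ to $\triangle A_1B_1C_1$. Along the way I will only need to control how the side lengths grow when a vertex is perturbed, which the triangle inequality handles cheaply.

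Concretely, I would introduce the intermediate triangles $\triangle ABC$, $\triangle ABC_1$, $\triangle AB_1C_1$, $\triangle A_1B_1C_1$ with areas $S, S', S'', S_3$. Applying Lemma \ref{lem:tri1pt} to the first pair (with $C$ moved to $C_1$ and the fixed side $AB$) gives $|S - S'| \leq \frac{\delta |AB|}{2}$. For the second pair, $B$ is moved to $B_1$ while $A$ and $C_1$ stay fixed, so $|S' - S''| \leq \frac{\delta |AC_1|}{2}$. For the third pair, $A$ is moved to $A_1$ while $B_1$ and $C_1$ stay fixed, so $|S'' - S_3| \leq \frac{\delta |B_1C_1|}{2}$.

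The only remaining step is to bound the perturbed sides in terms of the original sides. By the triangle inequality, $|AC_1| \leq |AC| + |CC_1| \leq |AC| + \delta$, and $|B_1C_1| \leq |BC| + |BB_1| + |CC_1| \leq |BC| + 2\delta$. Adding the three inequalities and using $|S - S_3| \leq |S - S'| + |S' - S''| + |S'' - S_3|$ yields
\[
|S - S_3| \leq \frac{\delta}{2}\bigl(|AB| + |AC| + |BC|\bigr) + \frac{\delta}{2}(\delta + 2\delta) = \frac{(|AB|+|AC|+|BC|)\delta}{2} + \frac{3\delta^2}{2},
\]
which is exactly the claimed bound.

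There is no real obstacle here; the only thing to be careful about is the bookkeeping of which side plays the role of the fixed side $AB$ in each invocation of Lemma \ref{lem:tri1pt}, and then making sure the $\delta$-terms that appear from replacing $C$ by $C_1$ (and later $B$ by $B_1$) accumulate to produce precisely the $3\delta^2/2$ tail. Any other order of moving the three vertices yields the same final bound after the same triangle-inequality estimates, so the symmetry of the conclusion is automatic.
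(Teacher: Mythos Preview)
Your proposal is correct and follows essentially the same route as the paper: introduce the two intermediate triangles $\triangle ABC_1$ and $\triangle AB_1C_1$, apply Lemma~\ref{lem:tri1pt} three times to get bounds $\frac{\delta|AB|}{2}$, $\frac{\delta|AC_1|}{2}$, $\frac{\delta|B_1C_1|}{2}$, and then use the triangle inequality to replace $|AC_1|$ by $|AC|+\delta$ and $|B_1C_1|$ by $|BC|+2\delta$. The only difference is notational (your $S',S''$ are the paper's $S_1,S_2$).
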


\begin{proof}
Let $S_1$ be the area of $\triangle ABC_1$, and let $S_2$ be the area of $\triangle AB_1C_1$. Then
\begin{align*}
	|S-S_3| &= |S-S_1 + S_1-S_2+S_2-S_3| \\
	&\leq |S-S_1| + |S_1-S_2|+|S_2-S_3|.
\end{align*}

By Lemma \ref{lem:tri1pt}, ${\displaystyle |S - S_1| \leq \frac{\delta |AB|}{2}}$. Applying Lemma \ref{lem:tri1pt} to the points $A$, $C_1$, $B$, and $B_1$, we get ${\displaystyle |S_1  - S_2| \leq \frac{\delta |AC_1|}{2}}$.
Finally, applying Lemma \ref{lem:tri1pt} to the points $B_1$, $C_1$, $A$, and $A_1$, we get ${\displaystyle |S_2  - S_3| \leq \frac{\delta |B_1C_1|}{2}}$.

Therefore, 
\begin{equation} \label{S-S_3}
|S - S_3| \leq \frac{\delta (|AB| + |AC_1| + |B_1C_1|)}{2}. 
\end{equation}

Note that the the triangle inequality gives us $|AC_1| \leq |AC| + |CC_1| \leq |AC| + \delta$ and $|B_1C_1| \leq |B_1B| + |BC| + |CC_1| \leq |BC| + 2\delta$. 
Using the upper bounds for $|AC_1|$ and $|B_1C_1|$ in \eqref{S-S_3}  gives us the desired bound for $|S - S_3|$.
\end{proof}

We also need the following simple lemma.

\begin{lemma} \label{lem:2}
For all $x$ in the interval $(0, \pi)$, we have $0<\sin x-x\cos x<\frac{x^3}{3}$. Also, for all  $x$ in the interval $(0,  \pi /2]$, we have ${\displaystyle \sin x \geq \frac{2}{\pi } x}$.
\end{lemma}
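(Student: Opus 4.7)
The plan is to prove each assertion by introducing an auxiliary function, differentiating, and using monotonicity arguments starting from $x=0$.

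For the first bound $0 < \sin x - x\cos x$ on $(0,\pi)$, I would set $f(x) = \sin x - x\cos x$ and observe that $f(0)=0$ while
$$f'(x) = \cos x - \cos x + x\sin x = x\sin x.$$
Since $\sin x > 0$ on $(0,\pi)$, $f'(x) > 0$ there, so $f$ is strictly increasing on $[0,\pi)$ and hence strictly positive on $(0,\pi)$.

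For the upper bound $\sin x - x\cos x < x^3/3$, I would set $g(x) = x^3/3 - \sin x + x\cos x$, check that $g(0)=0$, and compute
$$g'(x) = x^2 - \cos x + \cos x - x\sin x = x(x - \sin x).$$
The standard inequality $\sin x < x$ for $x>0$ (itself provable by the same derivative trick applied to $x-\sin x$) gives $g'(x) > 0$ for $x>0$, so $g$ is strictly increasing and $g(x) > 0$ on $(0,\infty)$, which is stronger than what is claimed.

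For the second assertion, I would use the concavity of $\sin$ on $[0,\pi/2]$: the function $\sin''(x) = -\sin x \leq 0$ on $[0,\pi/2]$, so on this interval the graph of $\sin$ lies above the chord joining $(0,0)$ and $(\pi/2,1)$, namely the line $y = (2/\pi)x$. Equivalently, one can note that $(\sin x)/x$ is non-increasing on $(0,\pi/2]$ (its derivative is $(x\cos x - \sin x)/x^2$, whose numerator is $-f(x) \leq 0$ by the first part), so $(\sin x)/x \geq (\sin(\pi/2))/(\pi/2) = 2/\pi$ throughout the interval. Both approaches are essentially routine; no step here looks like a real obstacle, and the whole lemma amounts to two short calculus exercises assembled in the natural order.
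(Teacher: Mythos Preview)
Your proof is correct and follows essentially the same route as the paper: the same auxiliary functions $f$ and $g$ (your $g$ is just the negative of the paper's), and the same monotonicity argument for $(\sin x)/x$ using the first inequality. The concavity remark you add is a harmless alternative for the last part.
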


\begin{proof}
Let $f(x)=\sin x-x\cos x$. Note $f(0)=0$. Consider $f'(x)=x\sin x$. Clearly,  $f'(x)>0$ on $(0, \pi)$ and we have the first half of the inequality.

Now let $g(x)=\sin x-x\cos x-\frac{x^3}{3}$. Note $g'(x)=x(\sin x -x)$. If $x>0$, $\sin x-x<0$ and thus $g'(x)<0$ on this interval also. Thus, because $g(0)=0$ and $g$ is decreasing on $(0, \pi)$, we have the second desired inequality.
To prove the last inequality of the lemma, consider the function ${\displaystyle h(x) = \frac{\sin x}{x}}$. We have ${\displaystyle h'(x) = \frac{x \cos x - \sin x}{x^2}}$. We proved above that $\sin x-x\cos x > 0$ for $x \in (0 , \pi)$. Thus $h(x)$ is decreasing in 
$(0 , \pi /2]$ and its minimum is $h(\pi /2) = 2  / \pi$.
\end{proof}

Here, and throughout the end of the paper, when $A$ is point and $\vec{v}$ is a vector, the equation $A = \vec{v}$ will mean $\overrightarrow{OA} = \vec{v}$, where $O$ is the origin of the coordinate system.

A helix is a space curve with constant curvature and torsion. 
Let $A_0< A_1, A_2$  be three points on a helix ${\mathcal H}$. In the next lemma we obtain formulas for the distances $|A_0A_1$, $|A_1A_2|$, $|A_0A_2|$, and for the area of triangle 
$\triangle A_0A_1A_2$. 

\begin{lemma} \label{sthelix}
Let ${\mathcal H}$ be a helix with curvature $\kappa >0 $ and torsion $\tau > 0$.   
Let $A_0$, $A_1$, and $A_2$ be three distinct points   on the helix ${\mathcal H}$   whose arclengths from the origin of the helix are $s_0$, $s_1$, $s_2$ respectively, with 
$ s_0 < s_1 < s_2$. Define $a = \frac{\kappa}{\kappa^2 + \tau^2}$,    $ b = \frac{\tau}{\kappa^2 + \tau^2}$, $t_i = \frac{s_i}{\sqrt{a^2+b^2}}$ for $i=0,1,2$, $h_1=t_1-t_0$,  and $h_2=t_2-t_1$. Then,

(a) $|A_0A_1|^2=4a^2\sin^2\left(\frac{h_1}{2}\right)+b^2h_1^2$, $|A_1A_2|^2=4a^2\sin^2\left(\frac{h_2}{2}\right)+b^2h_2^2$, and
$|A_0A_2|^2=4a^2\sin^2\left(\frac{h_1+h_2}{2}\right)+b^2(h_1+h_2)^2$;

(b) $(\text{Area}(\triangle A_0 A_1 A_2))^2 =(T_1 + T_2+T_3)/4$, where 

$T_1 :=16a^4\sin^2\frac{h_1}{2}\sin^2\frac{h_2}{2}\sin^2\frac{h_1+h_2}{2}$, 

$T_2:= a^2b^2h_1^2h_2^2\left(\frac{\sin \frac{h_2}{2}}{\frac{h_2}{2}} -\frac{\sin \frac{h_1}{2}}{\frac{h_1}{2}}\right)^2$, and

$ T_3 :=  16a^2b^2h_1h_2\sin \frac{h_1}{2} \sin \frac{h_2}{2}\sin^2\frac{h_1+h_2}{4}$. 
\end{lemma}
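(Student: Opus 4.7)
Since a helix with positive curvature $\kappa$ and positive torsion $\tau$ is determined up to a rigid motion of $\R^3$, and since distances and triangle areas are rigid-motion invariants, I would fix coordinates so that
\[
\gamma(s) = \bigl(a\cos(s/c),\ a\sin(s/c),\ bs/c\bigr), \qquad c = \sqrt{a^2+b^2} = \frac{1}{\sqrt{\kappa^2+\tau^2}}.
\]
A direct differentiation verifies that $\gamma$ has unit speed with curvature $\kappa$ and torsion $\tau$, confirming the chosen values of $a$ and $b$. Under the substitution $t_i = s_i/c$, the three points take the form $A_i = (a\cos t_i,\,a\sin t_i,\,b t_i)$.

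For part (a), the sum-to-product identities $\cos t_1-\cos t_0 = -2\sin\frac{t_0+t_1}{2}\sin\frac{h_1}{2}$ and $\sin t_1-\sin t_0 = 2\cos\frac{t_0+t_1}{2}\sin\frac{h_1}{2}$ imply that the squares of the first two coordinates of $A_1-A_0$ sum to $4a^2\sin^2(h_1/2)$, while the third squared contributes $b^2h_1^2$. The analogous computation with $h_2$, resp.\ $h_1+h_2$, produces $|A_1A_2|^2$ and $|A_0A_2|^2$.

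For part (b), a further rotation about the $z$-axis lets me take $t_1=0$, so that $t_0=-h_1$ and $t_2=h_2$; write $\alpha = h_1/2$ and $\beta = h_2/2$. I would then compute $\overrightarrow{A_1A_0}\times\overrightarrow{A_1A_2}$ component by component. The $z$-component collapses via
\[
\sin h_1+\sin h_2-\sin(h_1+h_2) = 2\sin(\alpha+\beta)\bigl(\cos(\alpha-\beta)-\cos(\alpha+\beta)\bigr)=4\sin\alpha\sin\beta\sin(\alpha+\beta),
\]
so its square is exactly $T_1$. The $x$- and $y$-components come out to $ab(h_1\sin h_2-h_2\sin h_1)$ and $2ab(h_1\sin^2\beta+h_2\sin^2\alpha)$ respectively.

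The main obstacle is to show that the sum of the squares of the $x$- and $y$-components of the cross product equals $T_2+T_3$. After pulling out $16 a^2 b^2$ and using $\sin h_i = 2\sin(h_i/2)\cos(h_i/2)$, this reduces to the trigonometric identity
\[
(\alpha\sin\beta\cos\beta-\beta\sin\alpha\cos\alpha)^2+(\alpha\sin^2\beta+\beta\sin^2\alpha)^2
= (\alpha\sin\beta-\beta\sin\alpha)^2+4\alpha\beta\sin\alpha\sin\beta\sin^2\tfrac{\alpha+\beta}{2},
\]
which I would establish by expanding the left side, using $\sin^2+\cos^2=1$ to consolidate the pure $\alpha^2\sin^2\beta+\beta^2\sin^2\alpha$ contributions, and applying $\cos\alpha\cos\beta-\sin\alpha\sin\beta = \cos(\alpha+\beta)$ together with $1-\cos(\alpha+\beta) = 2\sin^2\tfrac{\alpha+\beta}{2}$ to handle the remaining cross term. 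Substituting back $\alpha = h_1/2$ and $\beta = h_2/2$ identifies the two pieces on the right as $T_2$ and $T_3$; dividing the resulting expression for $\bigl|\overrightarrow{A_1A_0}\times\overrightarrow{A_1A_2}\bigr|^2$ by $4$ then yields the claimed formula for $(\text{Area}(\triangle A_0A_1A_2))^2$.
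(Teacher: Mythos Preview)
Your argument is correct. Both you and the paper reduce to the standard helix $\gamma(t)=(a\cos t,a\sin t,bt)$ via the fundamental theorem of curves, and part~(a) is handled essentially identically. The genuine difference is in part~(b): the paper computes the area through the Lagrange identity $4\,\mathrm{Area}^2=|\overrightarrow{A_1A_0}|^2|\overrightarrow{A_1A_2}|^2-(\overrightarrow{A_1A_0}\cdot\overrightarrow{A_1A_2})^2$, evaluates the dot product as $-4a^2\sin\frac{h_1}{2}\sin\frac{h_2}{2}\cos\frac{h_1+h_2}{2}-b^2h_1h_2$, and then regroups algebraically until the three terms $T_1,T_2,T_3$ emerge. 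You instead normalize further by rotating so that $t_1=0$ and compute the cross product componentwise: the $z$-component squared is immediately $T_1$, while the $x$- and $y$-components squared sum, via your trigonometric identity, to $T_2+T_3$. Your route gives a cleaner geometric decomposition (the $T_1$ contribution is exactly the projection onto the $xy$-plane), at the cost of one auxiliary identity to check; the paper's dot-product route avoids that identity but requires a longer chain of algebraic rearrangements before the three terms separate. Both computations are of comparable length and both are complete.
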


\begin{proof}
First, we consider a helix ${\mathcal H}(a,b)$ with parametrization 
\begin{equation} \label{sth}
\vec{r}(t)=\langle a\cos t, a \sin t, bt \rangle. 
\end{equation}
It is easy to check that we have ${\displaystyle  \kappa = \frac{a}{a^2+b^2}}$,  ${\displaystyle \tau = \frac{b}{a^2+b^2}}$, and 
 ${\displaystyle \kappa^2+\tau^2=\frac{1}{a^2+b^2}}$. From here, we get ${\displaystyle a = \frac{\kappa}{\kappa^2 + \tau^2}}$,    ${\displaystyle  b = \frac{\tau}{\kappa^2 + \tau^2}}$. Also, for the natural parameter $s$ we have
 $s = t\sqrt{a^2+b^2}$.

We have  $$A_0=(a\cos t_0, a\sin t_0, bt_0),  A_1=(a\cos t_1, a\sin t_1, bt_1), \mbox{ and  } A_2=(a\cos t_2, a\sin t_2, bt_2),$$ with $t_0<t_1< t_2$.

Therefore, 
\begin{align*}
	\overrightarrow{A_1A_0} &= \langle a(\cos t_0 - \cos t_1), a(\sin t_0 - \sin t_1), b(t_0 - t_1)\rangle, \text{ and } \\
	\overrightarrow{A_1A_2} &= \langle a(\cos t_2 - \cos t_1), a(\sin t_2 - \sin t_1), b(t_2 - t_1)\rangle.
\end{align*}
Therefore, 

\begin{align*}
	\left|\overrightarrow{A_1A_0}\right|^2 &=a^2(\cos t_0-\cos t_1)^2+a^2(\sin t_0-\sin t_1)^2+b^2(t_0-t_1)^2\\
	&=a^2(2 -2\cos t_0 \cos t_1-2\sin t_0 \sin t_1)+ b^2(t_0-t_1)^2 \\
	&=a^2(2 -2\cos (t_0 -t_1))+ b^2(t_0-t_1)^2 \\
	&=a^2(2 -2\cos h_1)+ b^2h_1^2 \\
	&=4a^2\sin^2\left(\frac{h_1}{2}\right)+b^2h_1^2.
\end{align*}
One can obtain the formulas for $|A_1A_2|$ and $|A_0A_2|$ similarly.

For the area of $\triangle A_0A_1A_2$ we use 
\begin{align*}
	\cos\theta&=\frac{\overrightarrow{A_1A_0}\cdot \overrightarrow{A_1A_2}}{\left|\overrightarrow{A_1A_0}\right| \left|\overrightarrow{A_1A_2}\right|} \\
	1-\sin^2 \theta &=\frac{\left(\overrightarrow{A_1A_0}\cdot \overrightarrow{A_1A_2}\right)^2}{\left|\overrightarrow{A_1A_0}\right|^2 \left|\overrightarrow{A_1A_2}\right|^2} \\
	\left|\overrightarrow{A_1A_0}\right|^2 \left|\overrightarrow{A_1A_2}\right|^2-\left(\overrightarrow{A_1A_0}\cdot \overrightarrow{A_1A_2}\right)^2&=\left|\overrightarrow{A_1A_0}\right|^2 \left|\overrightarrow{A_1A_2}\right|^2\sin^2 \theta   \\
	&=4\text{Area}(\triangle A_0 A_1 A_2)^2,
\end{align*}
where $\theta $ is the angle between $\overrightarrow{A_1A_0}$ and $\overrightarrow{A_1A_2}$. Next we calculate the dot product:

\begin{align*}	
	\overrightarrow{A_1A_0}\cdot\overrightarrow{A_1A_2} &= a^2(\cos t_0 - \cos t_1)(\cos t_2 - \cos t_1)\\
	&+ a^2(\sin t_0 - \sin t_1)(\sin t_2 - \sin t_1)+b^2(t_0 - t_1)(t_2 - t_1) \\
	&=a^2(\cos t_0 \cos t_2- \cos t_1\cos t_2 -\cos t_0 \cos t_1+ \cos^2 t_1) \\
	&+ a^2(\sin t_0 \sin t_2 - \sin t_1 \sin t_2 - \sin t_0 \sin t_1 + \sin^2 t_1)+b^2(t_0 - t_1)(t_2 - t_1)  \\
	&=a^2(\cos (h_2+h_1) - \cos h_2 -\cos h_1 + 1)-b^2h_1h_2 \\
	&=a^2((1-\cos h_2)(1-\cos h_1) -\sin h_2 \sin h_1)-b^2h_1h_2 \\
	&=4a^2\sin \frac{h_1}{2} \sin \frac{h_2}{2}\left( \sin \frac{h_1}{2} \sin \frac{h_2}{2}- \cos\frac{h_1}{2}\cos\frac{h_2}{2}\right)-b^2h_1h_2 \\
	&=-4a^2\sin \frac{h_1}{2} \sin \frac{h_2}{2} \cos \left(\frac{h_1+h_2}{2}\right)-b^2h_1h_2. \\
\end{align*}

Thus, we have
\begin{align*}
	4\text{Area}(\triangle A_0 A_1 A_2)^2 &= \left(4a^2\sin^2\frac{h_1}{2}+b^2h_1^2\right)\left(4a^2\sin^2\frac{h_2}{2}+b^2h_2^2\right)\\
	&-\left(4a^2\sin \frac{h_1}{2} \sin \frac{h_2}{2} \cos \left(\frac{h_1+h_2}{2}\right)+b^2h_1h_2\right)^2 \\
	&=16a^4\sin^2\frac{h_1}{2}\sin^2\frac{h_2}{2}\sin^2\frac{h_1+h_2}{2}+4a^2b^2\left(h_1^2\sin^2\frac{h_2}{2}  
+h_2^2\sin^2\frac{h_1}{2}\right)  \\
	& -8a^2b^2h_1h_2\sin \frac{h_1}{2} \sin \frac{h_2}{2} \cos\frac{h_1+h_2}{2} \\
	&=16a^4\sin^2\frac{h_1}{2}\sin^2\frac{h_2}{2}\sin^2\frac{h_1+h_2}{2}+4a^2b^2\left(h_1\sin\frac{h_2}{2}  
-h_2\sin\frac{h_1}{2}\right)^2  \\
	& +8a^2b^2h_1h_2\sin \frac{h_1}{2} \sin \frac{h_2}{2}\left(1- \cos\frac{h_1+h_2}{2} \right)\\
	&=16a^4\sin^2\frac{h_1}{2}\sin^2\frac{h_2}{2}\sin^2\frac{h_1+h_2}{2}+a^2b^2h_1^2h_2^2\left(\frac{\sin \frac{h_2}{2}}{\frac{h_2}{2}} -\frac{\sin \frac{h_1}{2}}{\frac{h_1}{2}}\right)^2 \\
	& +16a^2b^2h_1h_2\sin \frac{h_1}{2} \sin \frac{h_2}{2}\sin^2\frac{h_1+h_2}{4}.\\
\end{align*}

We proved the lemma in the case of a helix ${\mathcal H}(a,b)$ with parametrization \ref{sth}. 

 Now, consider  a helix  ${\mathcal H'}$ in general position,  with curvature $\kappa >0 $ and torsion $\tau > 0$. 
Let $A_0$, $A_1$, and $A_2$ be three distinct points  on the helix ${\mathcal H'}$   whose arclengths from the origin of the helix are $s_0$, $s_1$, $s_2$ respectively, with 
$0 < s_0 < s_1 < s_2$.

Here we use the uniqueness part of the Fundamental Theorem of the local theory of curves (for example, see do Carmo \cite{doC} p.19). It states that if $\alpha, \alpha' : I \to \R$ are regular parametrized curves with natural parameter $s$, 
which have the same curvature $\kappa(s)$ and the same torsion $\tau (s)$ for all $s$, then $\alpha'$ differs from $\alpha$ by a rigid motion, that is there exist an orthogonal linear map $\rho $ of $\R ^3$, with positive determinant, and a vector $c$ such that 
$\alpha' = \rho \circ \alpha + c$. 

Since the helix ${\mathcal H'}$ and the helix ${\mathcal H(a,b)}$ have the same curvature $\kappa $ and the same torsion $\tau$, there exists a rigid motion which maps the helix ${\mathcal H'}$ onto the helix ${\mathcal H(a,b)}$; 
 and it maps the points $A_0,A_1,A_2$ to some points $A_0',A_1',A_2'$ respectively, which are   on ${\mathcal H(a,b)}$ and have corresponding values of the natural parameter $s_0,s_1,s_2$. 

Note that the rigid motions do not change the distance between two points, and preserve the area  of  a triangle. Therefore,  $|A_0A_1|=|A_0'A_1'|$, $|A_1A_2|=|A_1'A_2'|$, $|A_0A_2| = |A_0'A_2'|$, Area$(\triangle A_0A_1A_2) = $ Area 
$(\triangle A_0'A_1'A_2') $. Moreover, $a$,$b$, $h_1$, and $h_2$ are uniquely determined by $s_0$, $s_1$, $s_2$, $\kappa$ and $\tau$. Therefore, the lemma holds in the case of a  helix  ${\mathcal H'}$ in general position.

\end{proof}

It is easy to estimate the terms $T_1$ and $T_3$. To estimate $T_2$ we use the following lemma.
\begin{lemma} \label{T2}
Let $h_1$ and $h_2$ be real numbers, both in the interval $(0,2\pi)$. Then, 
$$\left(\frac{\sin \frac{h_2}{2}}{\frac{h_2}{2}} -\frac{\sin \frac{h_1}{2}}{\frac{h_1}{2}}\right)^2  \leq\frac{\max ( h_1, h_2)^2}{144}(h_1-h_2)^2.$$
\end{lemma}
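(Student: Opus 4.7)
My plan is to reduce the inequality to an application of the mean value theorem to the function $f(x)=\frac{\sin x}{x}$ on the interval with endpoints $h_1/2$ and $h_2/2$, and then to control $f'$ using the first inequality already proved in Lemma \ref{lem:2}.

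First, I would set $f(x)=\frac{\sin x}{x}$ (with $f(0)=1$) and compute
$$f'(x)=\frac{x\cos x-\sin x}{x^2}.$$
Since $h_1,h_2\in(0,2\pi)$, the points $h_1/2$ and $h_2/2$ lie in $(0,\pi)$, so by the mean value theorem there is some $\xi$ strictly between $h_1/2$ and $h_2/2$ with
$$f\!\left(\tfrac{h_2}{2}\right)-f\!\left(\tfrac{h_1}{2}\right)=f'(\xi)\cdot\frac{h_2-h_1}{2}.$$

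Next I would bound $|f'(\xi)|$ using the first part of Lemma \ref{lem:2}, which gives $0<\sin x-x\cos x<\frac{x^3}{3}$ for $x\in(0,\pi)$. Applied at $x=\xi$, this yields
$$|f'(\xi)|=\frac{\sin\xi-\xi\cos\xi}{\xi^2}<\frac{\xi}{3}.$$
Since $\xi<\max(h_1/2,h_2/2)=\max(h_1,h_2)/2$, I obtain $|f'(\xi)|<\max(h_1,h_2)/6$.

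Squaring the identity from the mean value theorem and inserting this bound gives
$$\left(\frac{\sin\frac{h_2}{2}}{\frac{h_2}{2}}-\frac{\sin\frac{h_1}{2}}{\frac{h_1}{2}}\right)^{2}=f'(\xi)^{2}\cdot\frac{(h_2-h_1)^{2}}{4}\le\frac{\max(h_1,h_2)^{2}}{36}\cdot\frac{(h_1-h_2)^{2}}{4},$$
which is exactly the desired inequality. There is no real obstacle here; the only thing to be slightly careful about is to note that the range $h_1,h_2\in(0,2\pi)$ guarantees $\xi\in(0,\pi)$, which is precisely where the bound from Lemma \ref{lem:2} applies.
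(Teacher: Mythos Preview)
Your proof is correct and follows essentially the same approach as the paper: apply the mean value theorem to $\frac{\sin x}{x}$, invoke the bound $0<\sin x-x\cos x<\frac{x^3}{3}$ from Lemma~\ref{lem:2} to control the derivative, and then use $\xi\le\max(h_1,h_2)/2$ to finish. The only cosmetic difference is that you spell out the intermediate bound $|f'(\xi)|<\max(h_1,h_2)/6$ explicitly before squaring.
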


\begin{proof}
Applying the mean value theorem to the function ${\displaystyle \frac{\sin x}{x}}$, we have that there exists $\zeta$ between $\frac{h_1}{2}$ and  $\frac{h_2}{2}$  such that 
$$\left(\frac{\sin \frac{h_2}{2}}{\frac{h_2}{2}} -\frac{\sin \frac{h_1}{2}}{\frac{h_1}{2}}\right)^2 = \left(\frac{h_1}{2}- \frac{h_2}{2}\right)^2\left(\frac{\zeta \cos \zeta -\sin\zeta}{\zeta^2}\right)^2.$$
By Lemma \ref{lem:2}, we get that ${\displaystyle 0 < \frac{\sin x - x\cos x}{x^2} < \frac{x}{3}}$ for every $x \in (0, \pi )$. Therefore,
$$
	\left(\frac{\sin \frac{h_2}{2}}{\frac{h_2}{2}} -\frac{\sin \frac{h_1}{2}}{\frac{h_1}{2}}\right)^2 < \left(\frac{h_1}{2}- \frac{h_2}{2}\right)^2\left(\frac{\zeta }{3}\right)^2 
	\leq \frac{\max ( h_1, h_2)^2}{144}(h_1-h_2)^2.
$$

\end{proof}

In the next two lemmas we prove the existence of the positive constants ${\mathcal D_L}$ and ${\mathcal A_L}$  mentioned in the introduction.

\begin{lemma} \label{3vect}
Let $v_1$, $v_2$, and $v_3$ be linearly independent vectors in ${\mathbb R}^3$. Then, there exists a constant $c>0$ (depending on $v_1$, $v_2$, and $v_3$) such that 
$||m_1v_1+m_2v_2+m_3v_3|| \geq c$ for any integers $m_1$, $m_2$, and $m_3$ with $(m_1,m_2,m_3) \neq (0,0,0)$. 
\end{lemma}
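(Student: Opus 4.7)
The plan is to exploit linear independence to reduce the problem to the trivial fact that a nonzero integer vector has norm at least $1$. Let $T:\mathbb{R}^3\to\mathbb{R}^3$ be the linear map defined by $T(e_i)=v_i$ for $i=1,2,3$, where $e_1,e_2,e_3$ is the standard basis. Because $v_1,v_2,v_3$ are linearly independent, the matrix representing $T$ is invertible, so $T^{-1}$ exists and is a bounded linear operator on $\mathbb{R}^3$.

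Given integers $m_1,m_2,m_3$ not all zero, set $m=(m_1,m_2,m_3)^T\in\mathbb{Z}^3$. Then $m_1v_1+m_2v_2+m_3v_3=T(m)$, and applying $T^{-1}$ gives $m=T^{-1}(T(m))$. The operator norm estimate $\|m\|\leq\|T^{-1}\|_{\mathrm{op}}\,\|T(m)\|$ then yields
$$\|m_1v_1+m_2v_2+m_3v_3\|=\|T(m)\|\geq\frac{\|m\|}{\|T^{-1}\|_{\mathrm{op}}}.$$
Since $m$ is a nonzero integer vector, at least one of its coordinates is a nonzero integer, so $\|m\|\geq 1$. Setting $c:=1/\|T^{-1}\|_{\mathrm{op}}>0$ (a constant depending only on $v_1,v_2,v_3$), we obtain the required lower bound.

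There is no serious obstacle: the only thing to verify is that $\|T^{-1}\|_{\mathrm{op}}$ is finite, which is immediate from the invertibility of $T$ on the finite-dimensional space $\mathbb{R}^3$. (Alternatively, one could observe that $T(\mathbb{Z}^3)$ is the image of the discrete set $\mathbb{Z}^3$ under a homeomorphism onto $\mathbb{R}^3$, hence discrete, and then take $c$ to be the minimum norm of the finitely many nonzero lattice points inside the closed unit ball, defaulting to $1$ if this set is empty; but the operator-norm argument gives an explicit formula for $c$ and is cleaner.)
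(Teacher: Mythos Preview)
Your proof is correct and is essentially the same argument as the paper's, just packaged in operator-norm language rather than quadratic-form language. The paper writes $\|m_1v_1+m_2v_2+m_3v_3\|^2$ as a positive-definite quadratic form $Q(m)=m^T(T^TT)m$ and invokes the Spectral Theorem to bound it below by $\lambda_1\|m\|^2\ge\lambda_1$; since $\sqrt{\lambda_1}$ is exactly the smallest singular value of $T$, i.e.\ $1/\|T^{-1}\|_{\mathrm{op}}$, the two proofs produce the very same constant $c$.
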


\begin{proof}
We have $||m_1v_1+m_2v_2+m_3v_3||^2 = (m_1v_1+m_2v_2+m_3v_3)\cdot (m_1v_1+m_2v_2+m_3v_3) ={\displaystyle  \sum_{i=1}^3\sum_{j=1}^3 (v_i \cdot v_j) m_im_j :=Q(m_1,m_2,m_3)}$.

The quadratic form $Q(m_1,m_2,m_3)$ is positive definite, since $v_1$, $v_2$, and $v_3$ are linearly independent. Denote by $M(Q)$ the matrix of the quadratic form   $Q(m_1,m_2,m_3)$. Since $M(Q)$ is symmetric and  $Q(m_1,m_2,m_3)$ is positive definite, the eigenvalues of $Q(M)$  are real positive numbers, say $0 < \lambda_1 \leq \lambda_2 \leq \lambda_3$.  Moreover, by the Spectral Theorem for real symmetric matrices, there exists an orthonormal basis of vectors $w_1, w_2,w_3$ for 
${\mathbb R}^3$, where $w_1$, $w_2$, and $w_3$ are eigenvectors corresponding to the eigenvalues $ \lambda_1$,  $\lambda_2$, and $\lambda_3$. This implies that the minimum of $Q(m_1,m_2,m_3)$ on the unit sphere is $\lambda_1$.
Therefore, the lemma holds with $c = \sqrt{\lambda_1}$.
 
\end{proof} 

\begin{lemma} \label{lat}
Consider a lattice $\mathcal{L}=\mathcal{L}(v_0,v_1,v_2) = \{ v_0+m v_1 + nv_2 + pv_3: m,n,p \in \Z\}$. There exist positive constants ${\mathcal D_L}$ and ${\mathcal A_L}$  (depending on ${\mathcal L}$) such that

\noindent
(i)  if $A_0$ and $A_1$ are any two distinct lattice points in 
${\mathcal L}$, then $|A_0A_1| \geq {\mathcal D_L}$;

\noindent
and

\noindent 
(ii)  if $A_0$, $A_1$, and $A_2$ are any three non-collinear lattice points in ${\mathcal L}$, then   Area$(\triangle A_0A_1A_2) \geq {\mathcal A_L}$.

\end{lemma}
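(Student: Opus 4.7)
My plan is to reduce both claims to Lemma \ref{3vect}, applied first to the basis $\{v_1,v_2,v_3\}$ itself and then, for the area bound, to the triple of cross products $\{v_2\times v_3,\; v_1\times v_3,\; v_1\times v_2\}$. Part (i) is essentially immediate: if $A_0 = v_0 + m_1 v_1 + n_1 v_2 + p_1 v_3$ and $A_1 = v_0 + m_2 v_1 + n_2 v_2 + p_2 v_3$ are distinct points of $\mathcal{L}$, then $\overrightarrow{A_0A_1} = (m_2-m_1)v_1 + (n_2-n_1)v_2 + (p_2-p_1)v_3$ with an integer triple that is not $(0,0,0)$, so Lemma \ref{3vect} supplies a positive constant $c_1$ depending only on $v_1,v_2,v_3$ with $|A_0A_1|\geq c_1$. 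Set $\mathcal{D_L}:=c_1$.

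For (ii), write $\overrightarrow{A_0A_1} = m_1v_1+n_1v_2+p_1v_3$ and $\overrightarrow{A_0A_2} = m_2v_1+n_2v_2+p_2v_3$ with integer coefficients, and expand the cross product by bilinearity and antisymmetry to obtain
\[
\overrightarrow{A_0A_1}\times\overrightarrow{A_0A_2} \;=\; \alpha\,(v_2\times v_3) \;+\; \beta\,(v_1\times v_3) \;+\; \gamma\,(v_1\times v_2),
\]
where $\alpha = n_1 p_2 - n_2 p_1$, $\beta = p_1 m_2 - p_2 m_1$, and $\gamma = m_1 n_2 - m_2 n_1$ are integers. Since $A_0,A_1,A_2$ are non-collinear, $\overrightarrow{A_0A_1}$ and $\overrightarrow{A_0A_2}$ are linearly independent, so this cross product is non-zero and hence $(\alpha,\beta,\gamma)\neq(0,0,0)$.

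The one step with actual content is verifying that the three vectors $v_2\times v_3$, $v_1\times v_3$, $v_1\times v_2$ are linearly independent in $\mathbb{R}^3$. I will do this by testing any hypothetical relation $\lambda_1(v_2\times v_3) + \lambda_2(v_1\times v_3) + \lambda_3(v_1\times v_2) = 0$ against $v_1,v_2,v_3$ in turn: each dot product collapses to a single term, namely $\pm\lambda_i \det(v_1,v_2,v_3)$, and since $v_1,v_2,v_3$ are linearly independent we have $\det(v_1,v_2,v_3)\neq 0$, forcing each $\lambda_i=0$. With linear independence established, Lemma \ref{3vect} applied to $\{v_2\times v_3,\,v_1\times v_3,\,v_1\times v_2\}$ yields a positive constant $c_2$ with $|\alpha(v_2\times v_3)+\beta(v_1\times v_3)+\gamma(v_1\times v_2)|\geq c_2$ for all nonzero integer triples $(\alpha,\beta,\gamma)$. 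Setting $\mathcal{A_L}:=c_2/2$ gives
\[
\mathrm{Area}(\triangle A_0A_1A_2) \;=\; \tfrac{1}{2}\bigl|\overrightarrow{A_0A_1}\times\overrightarrow{A_0A_2}\bigr| \;\geq\; \mathcal{A_L}.
\]
I do not expect any genuine obstacle beyond this cross-product independence check; the proof is really just Lemma \ref{3vect} invoked twice, once for edge lengths and once for the integrally-generated cross product.
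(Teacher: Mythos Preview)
Your proposal is correct and follows essentially the same route as the paper: apply Lemma~\ref{3vect} directly to $\{v_1,v_2,v_3\}$ for part~(i), and for part~(ii) expand $\overrightarrow{A_0A_1}\times\overrightarrow{A_0A_2}$ as an integer combination of $v_i\times v_j$, verify these cross products are linearly independent by dotting a hypothetical relation against each $v_k$ and using that the scalar triple product is nonzero, and then invoke Lemma~\ref{3vect} again. Aside from a harmless sign slip in your formula for $\beta$ (the coefficient of $v_1\times v_3$ should be $m_1p_2-m_2p_1$), the argument matches the paper's proof.
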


\begin{proof}
If $A_0$ and $A_1$ are   two distinct lattice points in 
${\mathcal L}$, then $\overrightarrow{A_0A_1} = m' v_1 + n'v_2 + p'v_3$ with $m'$, $n'$, $p'$ integers such that $(m',n',p') \neq (0,0,0)$. Now, (i) follows from Lemma \ref{3vect}. 

Next, let $A_0$, $A_1$, and $A_2$ be three non-collinear lattice points in ${\mathcal L}$. We have that Area$(\triangle A_0A_1A_2) = ||\overrightarrow{A_0A_1}\times \overrightarrow{A_0A_2}||/2$. 

Since $A_0$, $A_1$, and $A_2$  are  non-collinear, then Area$(\triangle A_0A_1A_2)  \neq 0$. Therefore, $\overrightarrow{A_0A_1}\times \overrightarrow{A_0A_2} \neq \overrightarrow{0}$. 

Furthermore,  since $\overrightarrow{A_0A_1} = m' v_1 + n'v_2 + p'v_3$ and  $\overrightarrow{A_0A_2} = m'' v_1 + n''v_2 + p''v_3$ for some integers $m',m'',n',n'',p'$, and $p''$, then 
$$\overrightarrow{A_0A_1}\times \overrightarrow{A_0A_2} = q( v_1\times v_2) + r(v_2 \times v_3) + s(v_1 \times v_3),$$
with $q=m'n'' - m'n''$, $r=n'p''-n''p'$, and $s=m'p''-m''p'$. 

Since, $\overrightarrow{A_0A_1}\times \overrightarrow{A_0A_2} \neq \overrightarrow{0}$ we have $(q,r,s) \neq (0,0,0)$.

Moreover, the vectors $v_1\times v_2$,  $v_2\times v_3$, and $v_1\times v_3$ are linearly independent. 

Indeed, if $c_1( v_1\times v_2) + c_2(v_2 \times v_3)+c_3(v_1 \times v_3) = 0$, then after taking the dot product of the last equation with $v_3$, we get 
$c_1((v_1 \times v_2) \cdot v_3)=0$. However, the triple product $(v_1 \times v_2) \cdot v_3 \neq 0$, since the vectors $v_1$, $v_2$, and $v_3$ are linearly independent. 
We show similarly that $c_2=0$ and $c_3=0$. 

Thus, the  vectors $v_1\times v_2$,  $v_2\times v_3$, and $v_1\times v_3$ are linearly independent. 

Now (ii) follows from Lemma \ref{3vect}.
\end{proof}

\section{Proof of the main result}

First, we consider the case when the lattice points are {\it on} the helix. 

\begin{theorem} \label{on}
Let ${\mathcal H}$ be a helix with curvature $\kappa >0 $ and torsion $\tau > 0$. Let $\mathcal{L}=\mathcal{L}(v_0,v_1,v_2,v_3)$ be an affine lattice in ${\mathbb R}^3$, and 
let $A_0$, $A_1$, and $A_2$ be three distinct points in ${\mathcal L}$ which are also on the helix ${\mathcal H}$ and whose arclengths from the origin of the helix are $s_0$, $s_1$, $s_2$ respectively, with 
$s_0 < s_1 < s_2$. Then ${\displaystyle |A_0A_2| \geq \min \left (  \frac{\pi \tau}{\kappa^2 + \tau^2},  1.5{\mathcal A_L}^{\frac{1}{3}}\kappa^{-\frac{1}{3}}\right )} $ and  the length of the arc $\arc{A_0A_2}$ is at least
${\displaystyle \min \left (\frac{\pi}{\sqrt{\kappa^2 + \tau^2}}, 2.4 {\mathcal A_L}^{\frac{1}{3}} \kappa^{-\frac{1}{3}}\right )}$.
\end{theorem}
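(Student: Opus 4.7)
Set $a=\kappa/(\kappa^2+\tau^2)$, $b=\tau/(\kappa^2+\tau^2)$, and write $H=h_1+h_2=(s_2-s_0)\sqrt{\kappa^2+\tau^2}$; the arclength from $A_0$ to $A_2$ is $L:=s_2-s_0=H\sqrt{a^2+b^2}=H/\sqrt{\kappa^2+\tau^2}$. I split on whether $H\geq \pi$ or $H<\pi$. In the first case, Lemma~\ref{sthelix}(a) gives $|A_0A_2|^2\geq b^2H^2\geq\pi^2b^2$, so $|A_0A_2|\geq\pi\tau/(\kappa^2+\tau^2)$, and trivially $L\geq \pi/\sqrt{\kappa^2+\tau^2}$; both bounds hold.

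Assume $H<\pi$. First I check that $A_0,A_1,A_2$ are non-collinear. After the rigid motion from the proof of Lemma~\ref{sthelix}, the helix is $\vec r(t)=(a\cos t,a\sin t,bt)$ with $t_0<t_1<t_2$ and $t_2-t_0<\pi$. Collinearity would force the three $xy$-projections to lie either on a line in the plane (if the spanning line is non-vertical) or at a single point (if vertical). The single-point case requires $t_i\equiv t_j\pmod{2\pi}$, impossible for $|t_i-t_j|<\pi$; the line case is impossible since a line meets the circle of radius $a$ in at most two distinct points. Thus Lemma~\ref{lat} gives $\mathrm{Area}(\triangle A_0A_1A_2)\geq\mathcal{A}_L$, and Lemma~\ref{sthelix}(b) yields $4\mathcal{A}_L^2\leq T_1+T_2+T_3$.

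I then bound each $T_i$ by a multiple of $\kappa^2L^6$. From the identity $\sin x\sin y=\tfrac{1}{2}[\cos(x-y)-\cos(x+y)]$ we get $\sin(h_1/2)\sin(h_2/2)\leq\sin^2(H/4)$, and combining with $\sin x\leq x$ the formulas in Lemma~\ref{sthelix}(b) give $T_1\leq a^4H^6/64$ and $T_3\leq a^2b^2H^6/64$. Since $\kappa^2L^6=a^2(a^2+b^2)H^6$, summing gives $T_1+T_3\leq\kappa^2L^6/64$. For $T_2$, Lemma~\ref{T2} together with $h_1h_2\leq H^2/4$, $\max(h_1,h_2)\leq H$ and $|h_1-h_2|\leq H$ yields $T_2\leq a^2b^2H^8/2304$; using $H<\pi$ and $b^2H^2/(a^2+b^2)\leq H^2\leq\pi^2$ gives $a^2b^2H^8\leq\pi^2\kappa^2L^6$, hence $T_2\leq\pi^2\kappa^2L^6/2304$. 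Combining,
\[
    4\mathcal{A}_L^2\leq\frac{36+\pi^2}{2304}\,\kappa^2L^6,
\]
so $L\geq\bigl(9216/(36+\pi^2)\bigr)^{1/6}\mathcal{A}_L^{1/3}\kappa^{-1/3}>2.4\,\mathcal{A}_L^{1/3}\kappa^{-1/3}$ (since $9216/(36+\pi^2)\approx 200.9>191.1\approx 2.4^6$). Finally, Lemma~\ref{lem:2} gives $\sin(H/2)\geq H/\pi$ on $(0,\pi]$, so $|A_0A_2|^2\geq(4a^2/\pi^2+b^2)H^2\geq(4/\pi^2)L^2$, whence $|A_0A_2|\geq(2/\pi)L>(4.8/\pi)\mathcal{A}_L^{1/3}\kappa^{-1/3}>1.5\,\mathcal{A}_L^{1/3}\kappa^{-1/3}$.

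The main obstacle is the tightness of the constants: the sharp asymptotic (as $H\to 0$, $b\to 0$, $h_1=h_2$) value of $L/(\mathcal{A}_L^{1/3}\kappa^{-1/3})$ is $2^{4/3}\approx 2.52$, leaving only about $5\%$ slack to reach the claimed $2.4$. This forces the bound $T_1+T_3\leq\kappa^2L^6/64$ to be essentially sharp in its leading order (the product-to-sum step is tight in the equal-$h_i$ limit), and $T_2$ must be a strictly lower-order contribution—which is what the calculation above achieves.
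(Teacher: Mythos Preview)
Your proof is correct and follows essentially the same approach as the paper: the same case split on $h_1+h_2\gtrless\pi$, the same bounds $T_1\le a^4H^6/64$, $T_3\le a^2b^2H^6/64$, $T_2\le a^2b^2\pi^2H^6/2304$ (you reach the $T_1,T_3$ bounds via the product-to-sum inequality $\sin(h_1/2)\sin(h_2/2)\le\sin^2(H/4)$ instead of $\sin x\le x$ plus AM--GM, but the endpoint is identical), and the same conversion to $|A_0A_2|$ via $\sin(H/2)\ge H/\pi$. The one place you differ is the non-collinearity argument: the paper simply observes that $T_1>0$ when $0<h_1,h_2,H<\pi$, hence $S>0$, which is shorter than your projection argument (and your line case should, strictly speaking, note that if two of the three circle points coincide you are back in the ``vertical'' situation already handled).
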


\begin{proof} 
Define
   ${\displaystyle a = \frac{\kappa}{\kappa^2 + \tau^2}}$  and    ${\displaystyle  b = \frac{\tau}{\kappa^2 + \tau^2}}$. Recall that 
   $s = \sqrt{a^2+b^2}t$. Let ${\displaystyle t_i = \frac{s_i}{\sqrt{a^2+b^2}}}$, for $i=0,1,2$. Then $0 < t_0 < t_1 < t_2$ and 
   $A_i =  \vec{r}(t_i)$ for $i=0,1,2$. Let $h_1=t_1-t_0$ and $h_2 = t_2 - t_1$. 
   
  We consider two cases. 

\vskip 5pt
 {\bf Case I:}  $h_1 + h_2 \geq \pi$. 
  
  Then the arclength of the arc $\arc{A_0A_2}$  is  $$s_2-s_0 = (h_1+h_2)\sqrt{a^2+b^2} \geq \pi \sqrt{a^2+b^2} = \frac{\pi}{\sqrt{\kappa^2 + \tau^2}}.$$
  
  Also, by Lemma \ref{sthelix} (i) $$|A_0A_2| \geq b(h_1+h_2) \geq \pi b =     \frac{\pi \tau}{\kappa^2 + \tau^2}.$$
  
  Note that one cannot expect to get a nontrivial lower bound on $|A_0A_2|$   depending only on $\kappa $. For example, consider the case when $a$ is a large integer,  $b = \frac{1}{2\pi }$, $t_0=0$, $t_1 = 2\pi$,  and $t_2 = 4\pi $. 

\vskip 5pt
 {\bf  Case II:}  $h_1 + h_2 < \pi$.

   Denote by  $S$   the area of $\triangle A_0A_1A_2$. From Lemma  \ref{sthelix} (ii) we have $4S^2 = T_1 + T_2+ T_3$, where 
   
   $T_1 =16a^4\sin^2\frac{h_1}{2}\sin^2\frac{h_2}{2}\sin^2\frac{h_1+h_2}{2}$, 

$T_2= a^2b^2h_1^2h_2^2\left(\frac{\sin \frac{h_2}{2}}{\frac{h_2}{2}} -\frac{\sin \frac{h_1}{2}}{\frac{h_1}{2}}\right)^2$, and

$ T_3 =  16a^2b^2h_1h_2\sin \frac{h_1}{2} \sin \frac{h_2}{2}\sin^2\frac{h_1+h_2}{4}$. 
   
  Since $0 < h_1 + h_2 < \pi$, we have $T_1 > 0$, $T_2 \geq 0$, and $T_3 >0$. Therefore, $S > 0$, so the points $A_0$, $A_1$, and $A_2$ are non-collinear. Now, 
  by Lemma \ref{lat} we have $S \geq {\mathcal A_L} > 0$.  We obtain 
\begin{equation} \label{ATb}
4{\mathcal A_L}^2 \leq T_1 + T_2 + T_3. 
\end{equation}

Next we get bounds for $T_1$, $T_2$, and $T_3$. 

Since $0 < h_1 + h_2$ and $\sin x < x$  when $x > 0$ we obtain 
\begin{equation} \label{T1b}
   T_1 \leq \frac{a^4 h_1^2 h^2 (h_1+h_2)^2}{4} \leq \frac{a^4(h_1+h_2)^6}{64}. 
\end{equation}
(Here and below, we use the inequality $ab  \leq (a+b)^2/4$.)

Next, by Lemma \ref{T2} we have
$$2{\mathcal A_L}^2 \leq  T_2 \leq a^2b^2 h_1^2h_2^2  \frac{\max ( h_1, h_2)^2}{144}(h_1-h_2)^2 < a^2b^2  \frac{(h_1+h_2)^8}{2304}.$$
 
Since, $h_1 + h_2 < \pi$, we obtain, 
\begin{equation} \label{T2b}
  T_2 < a^2b^2 \frac{\pi ^2 (h_1+h_2)^6}{2304}.
\end{equation}

Again,  using $six < x$ for $x>0$ again, we obtain
 \begin{equation} \label{T3b}
  T_3 < a^2b^2 h_1^2 h_2^2 \frac{(h_1+h_2)^2}{4} \leq a^2b^2 \frac{(h_1+h_2)^6}{64}.
  \end{equation}
  
  Combining equation \eqref{ATb} with the bounds \eqref{T1b}, \eqref{T2b}, and \eqref{T3b}, we obtain
$$4{\mathcal A_L}^2 \leq  (h_1+h_2)^6a^2 \left ( \frac{a^2}{64} + b^2 \left ( \frac{1}{64} + \frac{\pi ^2}{2304} \right )  \right ) .$$  

 Now $  \frac{1}{64} + \frac{\pi ^2}{2304} < 0.02$. Therefore, 
 
 \begin{equation} \label{oncl}
 200{\mathcal A_L}^2  < (h_1+h_2)^6a^2 (a^2+b^2).
\end{equation}

Also, $200^{1/6} > 2.4$, so
\begin{equation}\label{h1h2c2}
h_1 + h_2   > 2.4 {\mathcal A_L}^{\frac{1}{3}}a^{-\frac{1}{3}}(a^2+b^2)^{-\frac{1}{6}},
\end{equation}
in Case 2.

Recall that $s_2 - s_0 =(h_1+h_2)\sqrt{a^2 + b^2}$,    ${\displaystyle a = \frac{\kappa}{\kappa^2 + \tau^2}}$,  and    ${\displaystyle  a^2+b^2 = \frac{1}{\kappa^2 + \tau^2}}$.

We get, 
\begin{equation} \label{s0s2c2}
s_2 - s_0  >  2.4 {\mathcal A_L}^{\frac{1}{3}} \kappa^{-\frac{1}{3}},
\end{equation}
completing the stated lower bound for the arclength of $\arc{A_0A_2}$.

Next, we obtain a lower bound for $|A_0A_2|$.  
 
 In this case, using Lemma \ref{lem:2} we obtain 
 $  \sin \left ( \frac {h_1 + h_2}{2}\right )  \geq \frac{1}{\pi }(h_1+h_2) $. By Lemma \ref{sthelix}  (i) 
 $|A_0A_2|^2=4a^2\sin^2\left(\frac{h_1+h_2}{2}\right)+b^2(h_1+h_2)^2$. Therefore,
 
       $$|A_0A_2| \geq \frac{2}{\pi } (h_1+h_2)(a^2+b^2)^{\frac{1}{2}} =\frac{2}{\pi} (s_2-s_0), $$ in Case 2.
 Using \eqref{s0s2c2} we get ${\displaystyle |A_0A_2| \geq \frac{4.8}{\pi}{\mathcal A_L}^{\frac{1}{3}} \kappa^{-\frac{1}{3}}}$.
 
 Since $\frac{4.8}{\pi} > 1.5$, we obtain the stated bound for $|A_0A_2|$. 
 
 We have established the theorem in the case of a helix ${\mathcal H}(a,b)$ with parametrization $\vec{r}(t)=\langle a\cos t, a \sin t, bt \rangle$.

\end{proof}

 Next we prove Corollary \ref{cor:1}. 
 
 \begin{proof} 
 First, by Lemma \ref{lem:1} in the case of the standard lattice in $\R^3$, ${\mathcal A_L} = \frac{1}{2}$. Therefore, the condition $\tau \kappa^{\frac{1}{3}} \geq 0.4(\kappa^2 + \tau ^2)$ implies 
 ${\displaystyle    \frac{\pi \tau}{\kappa^2 + \tau^2} > 1.5{\mathcal A_L}^{\frac{1}{3}}\kappa^{-\frac{1}{3}}} $ in the case of standard lattice  ${\mathcal L}$. The corollary follows by 
 noting that $\frac{1.5}{\sqrt[3]{2}} > 1.1$.
 \end{proof}
.

Finally, we prove Theorem \ref{main}.

\begin{proof}

Let $$0 < \delta \leq \min \left (\frac{\mathcal D_L}{4},  \frac{{\mathcal D_L}^2}{11\pi^3}(a^2+b^2)^{-1},  \frac{2{\mathcal A_L}(a^2+b^2)^{-\frac{1}{2}}}{11\pi } \right ).$$    Let  $A_0'$, $A_1'$, and $A_2'$ be three distinct lattice points 
within $\delta $ of the helix ${\mathcal H}$. Therefore, there exist points $A_0$, $A_1$, and $A_2$ on the helix, such that 
$$|A_0A_0'| \leq \delta, \quad |A_1A_1'| \leq \delta, \quad \mbox{ and } |A_2A_2'| \leq \delta.$$

Since $A_0'$, $A_1'$, and $A_2'$  are in ${\mathcal L}$,   by Lemma \ref{lat} we have $|A_0'A_1'| \geq {\mathcal D_L}$, $|A_0'A_2'| \geq {\mathcal D_L}$, and $|A_1'A_2'| \geq {\mathcal D_L}$. 
Since $\delta \leq {\mathcal D_L}/4$, by the triangle inequality,  we get 
$$|A_0 A_2| \geq   |A_0'A_2'| - |A_0A_0'| - |A_2A_2'| \geq {\mathcal D_L} - 2\delta \geq {\mathcal D_L}/2.$$ Similarly, $|A_0A_1| \geq {\mathcal D_L}/2$ and 
$|A_1A_2| \geq {\mathcal D_L}/2$
Thus, the points $A_0$, $A_1$, and $A_2$ are distinct. Let the values of the natural parameter   corresponding to $A_0$, $A_1$, and $A_2$ be $s_0$, $s_1$, and $s_2$, respectively. 
Without loss of generality we can assume $s_0 < s_1 < s_2$ (otherwise we relabel $A_0'$, $A_1'$, and $A_2'$).  As before, let $t_i = s_i (a^2+b^2)^{-\frac{1}{2}}$
for $i=0,1,2$,  $h_1 = t_1 - t_0 > 0$ and $h_2 = t_2 - t_1 >0$. 

We consider two cases.

\vskip 5pt
Case I: $h_1 + h_2 \geq \pi$. 

In this case, in exactly  the same way as in the proof of Theorem \ref{on},  we obtain $$ |A_0A_2| \geq     \frac{\pi \tau}{\kappa^2 + \tau^2}.$$
Now,  by the triangle inequality,  we get $$|A_0' A_2'| \geq |A_0A_2| - |A_0A_0'| - |A_2A_2'| \geq |A_0A_2| -2\delta  ,$$
so  $$ |A_0'A_2'| \geq \frac{\pi \tau}{\kappa^2 + \tau^2} - 2\delta .$$

\vskip 5 pt
Case II. $h_1 + h_2 \leq \pi$. 

Here, first we show that $A_0'$, $A_1'$ and $A_2'$ are not on a straight line.

 Recall that $|A_0A_1| \geq {\mathcal D_L}/2$. Therefore, $s_1 - s_0  \geq |A_0A_1| \geq   {\mathcal D_L}/2$. Since $s_1-s_0 = h_1 (a^2+b^2)^{\frac{1}{2}}$, we get $h_1 \geq \frac{ {\mathcal D_L}}{2} (a^2+b^2)^{-\frac{1}{2}}$. 
 Similarly, $s_2 - s_1 \geq  {\mathcal D_L}/2$ and 
 $h_2 \geq \frac {\mathcal D_L}{2} (a^2+b^2)^{-\frac{1}{2}}$.

Denote the area of $\triangle A_0 A_1 A_2$ by $S$.

By Lemma \ref{sthelix} (ii),  we have  $4S^2 = T_1 + T_2 + T_3$, where $T_1 = 16a^4\sin^2\frac{h_1}{2}\sin^2\frac{h_2}{2}\sin^2\frac{h_1+h_2}{2}$, $T_2 \geq 0$, and $T_3 >0$. 

  Since $h_1 + h_2 \leq \pi$, by Lemma 
\ref{lem:2} $$T_1 \geq \frac{16}{\pi^6} h_1^2 h_2^2 (h_1 + h_2)^2.$$
Therefore, 
${\displaystyle   4S^2 \geq   \frac{16}{\pi^6} h_1^2 h_2^2 (h_1 + h_2)^2}$. 
We get, 
\begin{equation} \label{sl lb}
S  \geq \frac{2}{\pi^3}h_1h_2(h_1+h_2). 
\end{equation}

 Denote the area of $\triangle A_0' A_1' A_2'$ by $S_3$.

 Assume that the  points $A_0'$, $A_1'$ and $A_2'$ {\it are} on a straight line. Then, 
 $S_3  = 0$.

 Applying Lemma \ref{lem:4} to $\triangle A_0 A_1 A_2$ and $\triangle A_0' A_1' A_2'$ we obtain
 \begin{equation} \label{ar dif}
 \left | S  -  S_3 \right |  \leq 
 \frac{\delta (|A_0A_1| + |A_1A_2| + |A_0A_2|)}{2} + \frac{3\delta ^2}{2}.
 \end{equation}
 Now, $|A_0A_1| < s_1 - s_0$, $|A_1A_2| < s_2 - s_1$, and $|A_0A_2| < s_2 - s_0$, so we obtain
 $$S \leq \delta (s_2-s_0) + \frac{3\delta ^2}{2}.$$
 Since, $s_2 - s_0 \geq |A_0A_1| + |A_1A_2| \geq {\mathcal D_L}$ and $\delta \leq {\mathcal D_L}/4$, we have $\delta \leq (s_2-s_0)/4$, so
\begin{equation} \label{sup}
S \leq \frac{11}{8} \delta (s_2-s_0)=\frac{11}{8}\delta (h_1+h_2)(a^2+b^2)^{-\frac{1}{2}} .
\end{equation}

Combining \eqref{sup} and   \eqref{sl lb} we obtain
$$\frac{2}{\pi^3}h_1h_2(h_1+h_2) \leq    S \leq \frac{11}{8}\delta (h_1+h_2)(a^2+b^2)^{-\frac{1}{2}} .$$

Therefore,
$$\frac{4}{11\pi^3}h_1h_2 \leq \delta .$$

Now, $h_1 = t_1 - t_0 = (s_1 - s_0)(a^2+b^2)^{-\frac{1}{2}} > |A_0A_1|(a^2+b^2)^{-\frac{1}{2}} \geq \frac{\mathcal D_L}{2}(a^2+b^2)^{-\frac{1}{2}}$. Similarly, 
$h_2 >  \frac{\mathcal D_L}{2}(a^2+b^2)^{-\frac{1}{2}}$. Therefore,

$$\delta > \frac{{\mathcal D_L}^2}{11\pi^3}(a^2+b^2)^{-1},$$
which contradicts the upper bound for $\delta $, so our assumption is false. Hence, the points 
$A_0'$, $A_1'$ and $A_2'$ {\it are not} on a straight line. By Lemma \ref{lat} $S_3$, the area of 
$\triangle A_0' A_1' A_2'$ is at least ${\mathcal A_L}$.

Next,  we get a lower bound for the area of $\triangle A_0A_1A_2$. Using \eqref{ar dif} we obtain 
$$ S \geq {\mathcal A_L}  -  \frac{\delta (|A_0A_1| + |A_1A_2| + |A_0A_2|)}{2} - \frac{3\delta ^2}{2}.$$

Now, $|A_0A_1| + |A_1A_2| + |A_0A_2| < 2(s_2 - s_0)$ and we showed above that $\delta \leq (s_2-s_0)/4$.
Therefore,
$$S \geq {\mathcal A_L}  - \frac{11\delta (s_2-s_0)}{8}.$$

We have
 $s_2 - s_0 = 2(h_1+h_2)(a^2 + b^2)^{\frac{1}{2}} \leq 2\pi(a^2 + b^2)^{\frac{1}{2}} $. 

We obtain, 
$$S \geq {\mathcal A_L}  - \frac{11\delta \pi(a^2 + b^2)^{\frac{1}{2}}}{4}.$$

Now, by the upper bound on $\delta$, we have 
$$\frac{11\delta \pi(a^2 + b^2)^{\frac{1}{2}}}{4} < \frac{\mathcal A_L}{2}.$$

Therefore, 
\begin{equation}   \label{s3lb}
S > \frac{\mathcal A_L}{2}.
\end{equation}

Now, we get lower bound for $h_1 + h_2$ the same way we did in the case of lattice points {\it on the helix}, in Case 2 of the proof of Theorem \ref{on}. The upper bounds of $T_1$, $T_2$, and $T_3$ are the same as before, and the estimates 
\eqref{T1b}, \eqref{T2b}, and \eqref{T3b} still hold. 

The only difference is that the lower bound for the area of $\triangle A_0 A_1 A_2$ now is ${\mathcal A_L}/2$ rather than ${\mathcal A_L}$.  

 The analogue equation \eqref{oncl} is 
 \begin{equation} \label{cl}
 50{\mathcal A_L}^2  < (h_1+h_2)^6a^2 (a^2+b^2),
 \end{equation}
 (with the difference that now the left-hand-side of \eqref{cl}  is $1/4$ of the left-hand-side of \eqref{oncl}).
 
 Also, $50^{1/6} > 2.4$, so
\begin{equation}\label{h1h2c2n}
h_1 + h_2   > 1.9 {\mathcal A_L}^{\frac{1}{3}}a^{-\frac{1}{3}}(a^2+b^2)^{-\frac{1}{6}}.
\end{equation}

Using Lemma \ref{lem:2} we obtain 
 $  \sin \left ( \frac {h_1 + h_2}{2}\right )  \geq \frac{1}{\pi }(h_1+h_2) $. By Lemma \ref{sthelix}  (i) 
 $|A_0A_2|^2=4a^2\sin^2\left(\frac{h_1+h_2}{2}\right)+b^2(h_1+h_2)^2$. Therefore,
 
       $$|A_0A_2| \geq \frac{2}{\pi } (h_1+h_2)(a^2+b^2)^{\frac{1}{2}} . $$

 Using \eqref{h1h2c2n} and  ${\displaystyle  \kappa = \frac{a}{a^2+b^2}}$ we get $$ |A_0A_2| \geq \frac{3.8}{\pi}{\mathcal A_L}^{\frac{1}{3}} \kappa^{-\frac{1}{3}}.$$
 
 Since $\frac{3.8}{\pi} > 1.2$,  and $|A_0'A_2'| \geq |A_0A_2| - 2\delta$ we obtain the stated bound for $|A_0'A_2'|$.

\end{proof}

\end{document}